\documentclass[reqno, letterpaper, oneside]{article}
\usepackage{amsmath,amsfonts,amssymb,amsthm}
\usepackage[final]{graphicx}
\usepackage[usenames,dvipsnames]{color}

\usepackage{bbm}
\usepackage{setspace}
\usepackage{geometry}
\usepackage{enumerate}

\geometry{
  hmargin={25mm, 25mm}, 
  vmargin={25mm, 25mm},
  headsep=10mm,
  headheight=5mm,
  footskip=10mm
}

\usepackage[colorlinks=false,,linktoc=allbookmarksopen=true,linktocpage,pdftex]{hyperref}
\usepackage[pdftex]{bookmark}



\newtheorem{theorem}{Theorem}
\newtheorem{lemma}[theorem]{Lemma}

\theoremstyle{definition}

\newcommand{\refT}[1]{Theorem~\ref{#1}}

\newcommand{\refL}[1]{Lemma~\ref{#1}}

\newcommand\E{\operatorname{\mathbb E{}}}
\renewcommand\Pr{\operatorname{\mathbb P{}}}

\newcommand\bigpar[1]{\bigl(#1\bigr)}

\def\rompar(#1){\textup(#1\textup)}    

\def\xexp(#1){e^{#1}}
\newcommand\ceil[1]{\lceil#1\rceil}

\newcommand\floor[1]{\lfloor#1\rfloor}

\newcommand\noproof{\qed}

\newcommand{\eps}{\epsilon}

\newcommand{\cS}{\mathcal{S}}



 





\newcommand{\indic}[1]{\mathbbm{1}_{\{{#1}\}}}

\let\OLDthebibliography\thebibliography
\renewcommand\thebibliography[1]{
  \OLDthebibliography{#1}
  \setlength{\parskip}{0pt}
  \setlength{\itemsep}{0pt plus 0.3ex}
}

\title{Note on Sunflowers}
\author{Tolson Bell\thanks{School of Mathematics, Georgia Institute of Technology, Atlanta, GA~30332, USA.
E-mail: {\tt tbell37@gatech.edu}. Research supported by NSF Grant DMS-1851843.}, 
 {} Suchakree Chueluecha\thanks{Department of Mathematics, Lehigh University, Bethlehem, PA~18015, USA.
E-mail: {\tt suc221@lehigh.edu}. Research supported by Georgia Institute of Technology, College of Sciences. },
 and Lutz Warnke\thanks{School of Mathematics, Georgia Institute of Technology, Atlanta, GA~30332, USA.
E-mail: {\tt warnke@math.gatech.edu}. Research supported by NSF Grant DMS-1703516, NSF~CAREER grant~DMS-1945481 and a Sloan Research Fellowship.}}
\date{September~17, 2020; revised March~17, 2021}

\begin{document}
\maketitle

\begin{abstract}
A sunflower with $p$ petals consists of $p$~sets whose pairwise intersections are identical. 
The goal of the sunflower problem is to find the smallest $r = r(p,k)$ such that any family of~$r^k$ distinct $k$-element sets contains a sunflower with $p$ petals. 
Building upon a breakthrough of Alweiss, Lovett, Wu and Zhang from~2019, 
Rao proved that~$r=O(p \log(pk))$ suffices; this bound was reproved by Tao in~2020. 
In this short note we record that~$r=O(p \log k)$ suffices, 
by using a minor variant of the probabilistic part of these recent proofs.
\end{abstract}

\section{Introduction}
A sunflower with $p$ petals is a family of $p$ sets whose pairwise intersections are identical (the intersections may be empty). 
Let $\mathrm{Sun}(p,k)$ denote the smallest natural number~$s$ with the property that any family of at least~$s$ distinct $k$-element sets contains a \mbox{sunflower with $p$ petals}. 
In~1960, Erd{\H o}s and Rado~\cite{ErdosRado} proved that ${(p-1)^k} < {\mathrm{Sun}(p,k)} \le {(p-1)^k k! + 1} = O((pk)^k)$, 
and conjectured that for any $p \ge 2$ there is a constant~${C_p > 0}$ such that~${\mathrm{Sun}(p,k) \le C_p^k}$ for all~$k \ge 2$. 
This famous conjecture in extremal combinatorics 
was one of Erd{\H o}s' favorite problems~\cite{Erdos1981}, 
for which he offered a \$1000 reward~\cite{Erdos1990};  
it remains open despite considerable attention~\cite{K}. 

In~2019, there was a breakthrough on the sunflower conjecture:
using iterative encoding arguments, \mbox{Alweiss}, Lovett, Wu and Zhang~\cite{ALWZ} proved that~${\mathrm{Sun}(p,k) \le (C p^3 \log k \log \log k)^k}$ for some \mbox{constant~$C > 0$}, 
opening the floodgates for further improvements.  
Using Shannon's noiseless coding \mbox{theorem}, Rao~\cite{RaoPreprint} subsequently simplified their proof and obtained a slightly better bound.
Soon thereafter, Frankston, Kahn, Narayanan and Park~\cite{FKNP} refined some key counting arguments from~\cite{ALWZ}.
Their ideas were then utilized by Rao~\cite{Rao} to improve the best-known sunflower bound to~${\mathrm{Sun}(p,k) \le (C p \log(pk))^k}$ 
for some \mbox{constant~$C>0$}, 
which in~2020 was reproved by Tao in his blog~\cite{Tao} using Shannon entropy~arguments. 

The aim of this short note is to record, for the convenience of other researchers, 
that a minor variant of (the probabilistic part of) the arguments from~\cite{Rao,Tao} 
gives~${\mathrm{Sun}(p,k) \le (C p \log k)^k}$ for some \mbox{constant~$C>0$}. 
%
\begin{theorem}\label{theorem:sunflower} 
There is a constant $C \ge 4$ such that $\mathrm{Sun}(p,k) \le (C p \log k)^k$ for all integers~$p,k \ge 2$.  
\end{theorem}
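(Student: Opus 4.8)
The plan is to follow the now-standard two-step strategy: first reduce the problem to finding many pairwise disjoint sets inside a \emph{spread} family, and only then carry out the probabilistic heart of the argument. Call a $k'$-uniform family $\cF$ \emph{$r$-spread} if $|\{F \in \cF : Z \subseteq F\}| \le r^{-|Z|}|\cF|$ for every nonempty set $Z$. The basic observation is that $p$ pairwise disjoint members of a spread family already yield a $p$-petal sunflower: if $G_1,\dots,G_p \in \cF$ are pairwise disjoint and $Y$ is a fixed set disjoint from all of them, then $Y \cup G_1,\dots,Y\cup G_p$ is a sunflower with core $Y$, since $(Y\cup G_i)\cap(Y\cup G_j)=Y$. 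Hence the whole theorem reduces to the statement that an $r$-spread $k'$-uniform family with $r \ge Cp\log k$ and $k'\le k$ contains $p$ pairwise disjoint sets.

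Next I would justify the reduction to the spread case by the usual greedy core extraction. Put $r = Cp\log k$ and suppose $\cF$ consists of more than $r^k$ distinct $k$-sets. Starting from $Y=\emptyset$, as long as the link $\cF_Y := \{F\setminus Y : F\in\cF,\ Y\subseteq F\}$ fails to be $r$-spread there is a nonempty $Z$ with $|\{G\in\cF_Y: Z\subseteq G\}| > r^{-|Z|}|\cF_Y|$; replacing $Y$ by $Y\cup Z$ maintains the invariant $|\cF_Y| > r^{k-|Y|}$. Since $|Y|$ strictly increases and the invariant forces $|Y|<k$, this terminates with a core $Y$ whose link $\cF_Y$ is $r$-spread and $k'$-uniform with $k':=k-|Y|\in\{1,\dots,k\}$. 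Applying the disjoint-sets statement to $\cF_Y$ and adjoining $Y$ then produces the required sunflower.

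The core of the argument is the disjoint-sets statement, and here I would import the probabilistic ``spread lemma'' underlying \cite{Rao,Tao} (in a slightly adjusted form): there is an absolute constant $C_0$ such that, if $\cG$ is $r$-spread and $k'$-uniform and $W$ is a random subset of the ground set including each element independently with probability $\delta$, then some member of $\cG$ satisfies $F\subseteq W$ except with probability at most $\tfrac14$, provided $\delta r \ge C_0 \log k'$. Granting this, the key point is that I would \emph{not} force $p$ prescribed random sets all to succeed simultaneously, since a union bound over $p$ events is precisely what reintroduces the wasteful $\log p$ and hence the old $\log(pk)$. Instead I would partition the ground set into $m = 2p$ classes by giving each element an independent uniform colour, so that each class is marginally a $\delta$-random set with $\delta = 1/(2p)$. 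Taking $C = 2C_0$ gives $\delta r = (C/2)\log k \ge C_0\log k'$ (using $k'\le k$), so each class individually contains a member of $\cG=\cF_Y$ except with probability $\le \tfrac14$. By linearity the expected number of classes containing no member is at most $m/4 = p/2$, and Markov's inequality gives $\Pr[\#\text{failing classes} \ge p+1] \le (p/2)/(p+1) < \tfrac12$. Thus with positive probability at least $p$ of the $2p$ (automatically disjoint) classes each contain a member, and selecting one member per successful class yields $p$ pairwise disjoint sets.

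I expect the main obstacle to be the quantitative spread lemma itself, namely proving that a $\delta$-random set meets an $r$-spread $k'$-uniform family with only constant failure probability as soon as $\delta r \ge C_0\log k'$. This is the iterative encoding / conditional-entropy estimate of \cite{ALWZ,Rao,Tao}, and the delicate point is to guarantee the threshold genuinely scales like $\log k'$ rather than $\log(pk')$; the entire improvement rests on driving the failure probability down to a \emph{constant} at this threshold, so that the number of petals $p$ enters only through the elementary over-partitioning and Markov step above, and never through the exponent of the failure probability. Tracking the constants through that estimate so that the final $C$ is absolute (and may be taken $\ge 4$) is then a routine but careful piece of bookkeeping.
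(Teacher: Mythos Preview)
Your proposal is correct and follows essentially the same approach as the paper: reduce to the spread case, then randomly partition the ground set into $2p$ (rather than $p$) colour classes and use linearity of expectation in place of a union bound, so that the spread lemma only needs to be invoked with a \emph{constant} failure probability and the dependence on $p$ disappears from the logarithm. The only cosmetic differences are that the paper uses failure probability $<\tfrac12$ and argues directly that the expected number of successful classes exceeds $p$, whereas you use failure probability $\le\tfrac14$ and route through Markov's inequality on the failing classes; and the paper phrases the reduction to the spread case as an induction on $k$ rather than your (equivalent) iterated greedy core extraction.
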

Setting~$r(p,k)=C  p \log k + \indic{k=1}p$, 
we shall in fact prove~${\mathrm{Sun}(p,k) \le r(p,k)^k}$ for all integers~$p \ge 2$ and~$k \ge 1$. 
Similarly to the strategy of~\cite{ALWZ,Rao,Tao}, this upper bound follows easily by induction on~$k \ge 1$ from \refL{lemma:main} below, 
where a family~$\cS$ of~$k$-element sets is called $r$-spread if there are at most $r^{k-|T|}$ sets of~$\cS$ that contain any non-empty set~$T$. 
(Indeed, the base case~$k=1$ is trivial due to~$r(p,1) = p$, 
and the induction step~$k \ge 2$ uses a simple case distinction: 
if~$\cS$ is~$r(p,k)$-spread, then \refL{lemma:main} guarantees a sunflower with $p$ petals; 
otherwise there is a non-empty set~$T$ such that more than $r(p,k)^{k-|T|} \ge r(p,k-|T|)^{k-|T|}$ sets of~$\cS$ contain~$T$, 
and among this family of sets we easily find a sunflower with $p$ petals using induction.)
\begin{lemma}\label{lemma:main}
There is a constant $C \ge 4$ such that, setting~$r(p,k) = C  p \log k$, 
the following holds for all integers~$p,k \ge 2$.  
If a family~$\cS$ with ${|\cS| \ge r(p,k)^k}$ sets of size~$k$ is \mbox{$r(p,k)$-spread}, then~$\cS$ contains~$p$ disjoint~sets. 
\end{lemma}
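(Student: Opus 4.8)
The plan is to deduce \refL{lemma:main} from a core probabilistic statement about a single random subset, and then to pass from ``one member inside a random subset'' to ``$p$ pairwise disjoint sets'' via a random colouring together with a first-moment argument; avoiding the union bound at this last step is exactly what replaces the $\log(pk)$ of \cite{Rao} by $\log k$. Write $X = \bigcup_{S \in \cS} S$ for the ground set, and for $\gamma \in (0,1)$ let $W$ denote the $\gamma$-random subset of $X$ obtained by including each element independently with probability $\gamma$. The core statement I would isolate is:
\begin{quote}
\emph{There is a constant $C'' > 0$ such that, if $\cS$ is an $r$-spread family of $k$-sets with $|\cS| \ge r^k$ and $\gamma r \ge C'' \log k$, then $\P\bigl[\exists\, S \in \cS : S \subseteq W\bigr] \ge 3/4$.}
\end{quote}

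Granting this, here is how I would finish. Colour each element of $X$ independently and uniformly with one of $m := 2p$ colours, and let $W_1, \dots, W_m$ be the resulting colour classes. Each $W_j$ is marginally a $\gamma$-random subset with $\gamma = 1/(2p)$, so with $r = r(p,k) = Cp\log k$ we have $\gamma r = (C/2)\log k \ge C'' \log k$ once $C \ge 2C''$; hence the core statement gives $\P[\exists S \in \cS : S \subseteq W_j] \ge 3/4$ for every $j$. Letting $N$ be the number of classes that contain a member of $\cS$, linearity of expectation yields $\E[N] \ge \tfrac34 m$, so $\E[m - N] \le m/4 = p/2$, and Markov's inequality gives $\P[N \le p-1] \le (p/2)/(p+1) < 1/2$. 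Thus with positive probability at least $p$ of the colour classes contain a member of $\cS$; choosing one such member from each of $p$ of these classes produces $p$ sets lying in distinct colour classes, which are therefore pairwise disjoint. Crucially, this first-moment step needs only a \emph{constant} per-class success probability, so the factor $p$ enters only through $\gamma = 1/(2p)$ and cancels against $r \propto p$, leaving the $\log k$ threshold untouched.

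The main work, and the main obstacle, is the core statement, which I would prove by the encoding/entropy method of \cite{Rao,Tao}. Let $S$ be uniform in $\cS$ and independent of $W$, so that $H(S) = \log_2|\cS| \ge k\log_2 r$. Treating $W$ as side information shared between an encoder and a decoder, I would build a short description of $S$ by revealing its elements one at a time in increasing order and splitting them according to membership in $W$: an element lying outside $W$ costs only about $\log_2 r$ bits, because the $r$-spread hypothesis bounds by $r^{k-|T|}$ the number of members of $\cS$ extending the already-revealed initial segment $T$, whereas an element lying inside $W$ can be charged against $W$ itself and costs on the order of $\log_2(1/\gamma)$ bits. Balancing these contributions against the entropy budget $H(S) \ge k\log_2 r$ forces a typical $S$ to have almost all of its elements inside $W$ as soon as $\gamma r \gtrsim \log k$, which is what drives the conclusion. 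The delicate point --- where I expect the real effort to lie --- is the bookkeeping: a naive union bound over $\cS$ overcounts hopelessly, the spread hypothesis controls only numbers of supersets rather than probabilities, and the elements of $S$ falling \emph{inside} $W$ must be encoded without paying the full $\log_2 r$ each. Making the constants line up so that $\gamma r \ge C'' \log k$ suffices for a fixed success probability, uniformly in $p$ and $k$, is the crux, and amounts to the ``minor variant'' of \cite{Rao,Tao} promised in the introduction.
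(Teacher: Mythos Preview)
Your approach is essentially identical to the paper's: both randomly $2p$-colour the ground set, invoke the Rao--Tao estimate (your ``core statement'' is exactly the paper's \refT{thm:petal} specialised to a fixed~$\eps$) to get a constant per-class success probability, and then use linearity of expectation rather than a union bound to find $p$ good classes. The only cosmetic differences are that the paper takes $\eps=1/2$ and simply notes that $\E[N]>p$ already forces $N\ge p$ for some outcome (your Markov step is unnecessary), and that the paper treats \refT{thm:petal} as a black box from~\cite{Rao,Tao} rather than re-deriving it.
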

Inspired by~\cite{ALWZ}, in~\cite{Rao,Tao} probabilistic arguments 
are used to deduce \refL{lemma:main} with~$r(p,k) = \Theta(p \log(pk))$ from \refT{thm:petal} below, 
where~$X_\delta$ denotes the random subset of~$X$ in which each element is included independently with probability~$\delta$. 
\begin{theorem}[Main technical estimate of~\cite{Rao,Tao}]\label{thm:petal} 
There is a constant~$B \ge 1$ such that the following holds for any integer~${k \ge 2}$, any reals~${0 <\delta,\epsilon \le 1/2}$, ${r \ge B\delta^{-1}\log(k/\epsilon)}$, and any family~$\cS$ of $k$-element subsets of a finite set~$X$. 
If~$\cS$ is \mbox{$r$-spread} with~$|\cS| \ge r^k$, then ${\Pr(\exists S \in \cS: S \subseteq X_\delta)} > {1-\epsilon}$.
\noproof  
\end{theorem}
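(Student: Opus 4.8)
The plan is to prove \refT{thm:petal} by the information-theoretic compression method that underlies Rao's coding proof~\cite{Rao} and Tao's entropy proof~\cite{Tao}. Write $W = X_\delta$, let $\cF$ be the failure event that no $S \in \cS$ satisfies $S \subseteq W$, and let $\mathbf{S}$ be drawn uniformly at random from $\cS$, independently of $W$. Since $\mathbf{S}$ is independent of $W$ and $|\cS| \ge r^k$, the conditional Shannon entropy satisfies $H(\mathbf{S} \mid W) = H(\mathbf{S}) = \log_2|\cS| \ge k\log_2 r$. The strategy is to construct a uniquely decodable encoding of $\mathbf{S}$ that uses $W$ as side information shared by encoder and decoder, and then to read off $\Pr(\cF) \le \epsilon$ by comparing its expected length to the entropy via the Shannon source coding bound $\E[\text{length}] \ge H(\mathbf{S}\mid W)$.

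For the encoding I would transmit only the uncovered part $\mathbf{S} \setminus W$ together with enough bookkeeping, and let the decoder reconstruct the covered part $\mathbf{S}\cap W$ greedily from $W$. Fix a total order on the ground set $X$. Maintaining a committed prefix $T$ and the live family $\cS_T = \{S \in \cS : T \subseteq S\}$, the decoder repeatedly adjoins the smallest element of $W$ consistent with keeping $\cS_T$ nonempty, while the explicitly named elements of $\mathbf{S}\setminus W$ are inserted at their prescribed positions. The $r$-spread hypothesis enters precisely through $|\cS_T| \le r^{k-|T|}$ for $T \neq \emptyset$: it bounds the branching of the reconstruction tree, so each explicitly named element costs about $\log_2 r$ bits. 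The expected size of $\mathbf{S}\cap W$ is $\delta k$, so the covered elements — transmitted ``for free'' via $W$ — are the source of a saving of order $\delta k\log_2 r$ over the trivial $k\log_2 r$ code, against which one must charge the bookkeeping needed to mark which positions are covered (of order $\delta k\log_2(1/\delta)$) and a correction term.

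The crux of the argument, and the step I expect to be the main obstacle, is the quantitative accounting that turns this saving into the bound $\Pr(\cF)\le\epsilon$. Two things must be controlled simultaneously. First, the greedy fill must be made genuinely lossless: encoder and decoder have to maintain identical prefixes $T$, and the instances where the fill diverges from $\mathbf{S}$ — which are exactly the configurations in which $W$ fails to supply a correct continuation — must be detected and corrected, with their cost supported on (a suitable relative of) the failure event $\cF$. Second, one must show the admissible continuations stay numerous on average, using the spread bound $|\cS_T|\le r^{k-|T|}$ together with $|\cS|\ge r^k$ to keep the reconstruction tree richly branching, so that a $\delta$-dense $W$ typically supplies the needed element and the per-step failure probability is $O(\delta)$-small. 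The threshold $r \ge B\delta^{-1}\log(k/\epsilon)$ is calibrated exactly so that the net saving $\approx \delta k\log_2(\delta r)$ dominates the correction cost incurred on the probability-$\Pr(\cF)$ event, where the logarithmic factor $\log(k/\epsilon)$ absorbs both the $k$ reconstruction steps and the $\approx\log_2(1/\epsilon)$ price of the conditioning; getting this balance and the inequality direction right is the delicate heart of the proof, which I would carry out by a linearity-of-expectation estimate over the $k$ steps using the independence of $W$ from $\mathbf{S}$.
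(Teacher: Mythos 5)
Your proposal correctly identifies the compression/entropy framework behind~\cite{Rao,Tao}, but it contains a genuine gap --- and you flag it yourself: the ``quantitative accounting'' that converts the encoding into $\Pr(\cF)\le\eps$ \emph{is} the content of \refT{thm:petal}, and your sketch defers it rather than carrying it out. Worse, the single-pass scheme you describe --- sample one $W=X_\delta$, greedily fill from $W$, transmit $\mathbf{S}\setminus W$, and finish with ``a linearity-of-expectation estimate over the $k$ steps'' --- is structurally insufficient for the stated conclusion. One round of this argument shows at best that a typical (or cleverly re-chosen) $S\in\cS$ has its uncovered part $S\setminus W$ shrink by a constant factor, with a saving of roughly $\log_2(\delta r)$ bits per covered element; it cannot by itself produce the full-coverage event $S\subseteq X_\delta$ with probability $1-\eps$, and summing a per-step failure probability of order $\delta$ over $k$ steps gives $O(\delta k)$, which is vacuous once $\delta k\ge 1$. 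Both actual proofs avoid this by \emph{iterating}: they split the density $\delta$ into $O(\log(k/\eps))$ independent layers, apply a one-step compression (or entropy-decrement) estimate in each layer, and --- this is the real work --- show that the family of uncovered fragments remains suitably spread so the estimate can be applied again, the uncovered parts shrinking geometrically until they vanish. It is precisely this iteration that generates the $\log(k/\eps)$ factor in the hypothesis $r\ge B\delta^{-1}\log(k/\eps)$; your plan contains no mechanism for re-establishing spreadness of the fragments, and without it the bit accounting you outline cannot close.

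For comparison, note that the paper does not reprove this core estimate at all: \refT{thm:petal} is cited from~\cite{Rao,Tao}, and the appendix supplies only a routine transfer of Rao's Lemma~4 --- stated there for the uniform random $m$-element subset $X_m$ --- to the binomial model $X_\delta$, via the law of total probability, monotonicity, and a Chernoff bound, at the modest cost of replacing $\eps$ by $\eps^2$ and $\delta$ by $\delta/2$. So relative to the paper your attempt aims much higher, at the compression core itself, but delivers a strategy outline in which the decisive steps --- lossless synchronization of encoder and decoder, the exact bit accounting, and above all the layer-by-layer iteration --- are named as difficulties rather than resolved.
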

\noindent
The core idea of~\cite{ALWZ,Rao,Tao} is to randomly partition the set~$X$ into $V_1 \cup \cdots \cup V_{p}$, 
by independently placing each element $x \in X$ into a randomly chosen~$V_i$. 
Note that the marginal distribution of each $V_i$ equals the distribution of~$X_{\delta}$ with~$\delta=1/p$. 
Invoking \refT{thm:petal} with~$\eps=1/p$ and~${r =  B\delta^{-1}\log(k/\epsilon)}$, 
a standard union bound argument implies that, with non-zero probability, 
all of the random partition-classes~$V_i$ contain a set from~$\cS$. 
Hence~$p$ disjoint sets~$S_1, \ldots, S_p \in \cS$ must~exist, 
which proves \refL{lemma:main} with~$r(p,k)= Bp \log(pk)$.

We prove \refL{lemma:main} with~$r(p,k)=\Theta(p \log k)$ using a minor twist: 
by randomly partitioning the vertex-set into more than~$p$ classes~$V_i$,  
and then using linearity of expectation (instead of a union bound). 
%
\begin{proof}[Proof of \refL{lemma:main}]%
Set~$C = 4B$. 
We randomly partition the set~$X$ into $V_1 \cup \cdots \cup V_{2p}$, 
by independently placing each element $x \in X$ into a randomly chosen~$V_i$. 
Let~$I_i$ be the indicator random variable for the event that~$V_i$ contains a set from~$\cS$. 
Since $V_i$ has the same distribution as~$X_{\delta}$ with~$\delta=1/(2p)$, 
by invoking \refT{thm:petal} with $\eps=1/2$ and~${r=r(p,k)} = {2Bp\log(k^2)} 
\ge {B\delta^{-1}\log(k/\epsilon)}$, we obtain~$\E I_i > 1/2$. 
Using linearity of expectation, the expected number of partition-classes~$V_i$ with $I_i=1$ is thus at least~$p$. 
Hence there must be a partition where at least~$p$ of the~$V_i$ contain a set from~$\cS$, 
which gives the desired $p$ disjoint sets~$S_1, \ldots, S_p \in \cS$. 
\end{proof}
\noindent
Generalizing this idea, \refT{thm:petal} gives~${p > \floor{1/\delta}(1-\eps)}$ disjoint sets~${S_1, \ldots, S_p \in \cS}$, 
which in the special case~${\floor{1/\delta} \eps \le 1}$ 
(used in~\cite{ALWZ,Rao,Tao} with~${\delta=\eps=1/p}$) 
simplifies to~${p\ge\floor{1/\delta}}$.

\section{Remarks}
Our proof of \refL{lemma:main} only invokes \mbox{\refT{thm:petal}} with~$\eps=1/2$, 
i.e., it does not exploit the fact that \mbox{\refT{thm:petal}} has an essentially optimal dependence on~$\eps$ (see \refL{lem:sharp} below). 
In particular, this implies that we could alternatively also 
prove \refL{lemma:main} and thus the~$\mathrm{Sun}(p,k) \le (C p \log k)^k$ bound of \mbox{\refT{theorem:sunflower}} 
using the combinatorial arguments of Frankston, Kahn, Narayanan and Park~\cite{FKNP} 
(we have verified that the proof of~\mbox{\cite[Theorem~1.7]{FKNP}} can be extended 
to yield \mbox{\refT{thm:petal}} under the stronger assumption~${r \ge B \delta^{-1} \max\{\log k, \log^2 (1/\eps)\}}$, say). 

We close by recording that 
\refT{thm:petal} is essentially best possible with respect to the $r$-spread assumption, 
which follows from the construction in~\cite[Section~2]{ALWZ} 
that in turn builds upon~\cite[Theorem~II]{ErdosRado}. 
\begin{lemma}\label{lem:sharp}
For any reals ${0 <\delta,\epsilon \le 1/2}$ and any integers ${k \ge 1}$, ${1 \le r \le 0.25 \delta^{-1}\log(k/\eps)}$, 
there exists an $r$-spread family~$\cS$ of $k$-element subsets of~$X=\{1, \ldots, rk\}$ 
with $|S| = r^k$ and ${\Pr(\exists S \in \cS: S \subseteq X_\delta)} < {1-\epsilon}$. 
\end{lemma}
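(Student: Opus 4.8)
The plan is to use the classical block/transversal construction from \cite{ALWZ,ErdosRado}. Partition the ground set $X = \{1, \ldots, rk\}$ into $k$ consecutive blocks $B_1, \ldots, B_k$ of size $r$ (say $B_j = \{(j-1)r+1, \ldots, jr\}$), and let $\cS$ be the family of all transversals, i.e.\ all $k$-element sets containing exactly one element from each block. This immediately gives a family of $r^k$ sets, each of size $k$, as required.

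First I would verify the $r$-spread property. For a non-empty $T \subseteq X$: if $T$ contains two elements of a common block, then no transversal contains $T$; otherwise $T$ meets exactly $|T|$ distinct blocks, and the number of transversals containing $T$ is $r^{k-|T|}$, since in each of the remaining $k-|T|$ blocks the element may be chosen freely. In both cases the count is at most $r^{k-|T|}$, so $\cS$ is $r$-spread. The next, and conceptually key, step is the observation that a transversal lies inside $X_\delta$ if and only if one can select an element of $X_\delta$ from \emph{every} block, i.e.\ if and only if each block meets $X_\delta$. Since the blocks are disjoint and each has size $r$, independence of the elements yields the exact formula $\Pr(\exists S \in \cS: S \subseteq X_\delta) = \bigl(1-(1-\delta)^r\bigr)^k$.

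It then remains to bound this quantity. Writing $q = (1-\delta)^r$, I would lower-bound $q$ using $1-\delta \ge e^{-2\delta}$ for $0 < \delta \le 1/2$ (which follows from $-\log(1-\delta) \le \delta/(1-\delta) \le 2\delta$) together with the hypothesis $r \le \tfrac14 \delta^{-1}\log(k/\eps)$: this gives $2\delta r \le \tfrac12\log(k/\eps)$ and hence $q \ge e^{-2\delta r} \ge (\eps/k)^{1/2}$. Consequently $qk \ge \sqrt{\eps k} \ge \sqrt{\eps}$ since $k \ge 1$, and so $(1-q)^k \le e^{-qk} \le e^{-\sqrt{\eps}}$.

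The main obstacle is the final elementary inequality $e^{-\sqrt{\eps}} < 1-\eps$ for all $0 < \eps \le 1/2$, equivalently $\sqrt{\eps} > -\log(1-\eps)$. This is exactly the point where the slim margin appears: the inequality fails for $\eps$ much larger than $1/2$, and it is precisely what dictates the constant $1/4$ in the assumed upper bound on $r$. I would dispatch it by a short calculus argument: the function $g(\eps) = \sqrt{\eps} + \log(1-\eps)$ satisfies $g(0)=0$, has $g'(\eps) = \tfrac1{2\sqrt{\eps}} - \tfrac1{1-\eps}$ with a single interior critical point (a maximum), so its minimum over $(0,1/2]$ is attained at an endpoint, and a direct check gives $g(\eps) > 0$ on this interval (in particular $g(1/2) = \sqrt{1/2} + \log(1/2) > 0$). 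Combining the pieces yields $\Pr(\exists S \in \cS: S \subseteq X_\delta) = (1-q)^k < 1-\eps$, as claimed.
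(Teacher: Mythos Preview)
Your proposal is correct and follows essentially the same approach as the paper: the same block/transversal construction, the same chain of inequalities $(1-(1-\delta)^r)^k \le e^{-(1-\delta)^r k} < e^{-e^{-2\delta r}k} \le e^{-\sqrt{\eps k}} \le e^{-\sqrt{\eps}} < 1-\eps$, with the same elementary endpoint check $e^{-\sqrt{\eps}} < 1-\eps$ for $0<\eps\le 1/2$. You simply spell out the $r$-spread verification and the final calculus step in more detail than the paper does.
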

\begin{proof}%
We fix a partition~$V_1 \cup \cdots \cup V_{k}$ of~$X$ into sets of equal size~$|V_i| = r$, 
and define~$\cS$ as the family of all \mbox{$k$-element} sets containing exactly one element from each~$V_i$. 
It is easy to check that~$\cS$ is \mbox{$r$-spread}, with~$|\cS|=r^k$. 
Focusing on the necessary event that~$X_\delta$ contains at least one element from each~$V_i$, 
we~obtain 
\begin{align*}
\Pr(\exists S \in \cS: S \subseteq X_\delta) 
\: \le \: 
\bigpar{1-(1-\delta)^r}^k \le e^{-(1-\delta)^r k} < e^{-e^{-2\delta r}k} \le e^{-\sqrt{\eps k}} < 1-\eps 
\end{align*}
by elementary considerations (since~$e^{-\sqrt{\eps}} < 1-\eps$ due to~$0 < \eps \le 1/2$). 
\end{proof}

\section{Acknowledgements}
This research was conducted as part of the 2020~REU program at Georgia Institute of Technology.  

\pagebreak[3]

\small
\bibliographystyle{amsplain}

\pagebreak[3]
\normalsize

\begin{appendix}

\section*{Appendix: \refT{thm:petal}}\label{app}
\refT{thm:petal} follows from Tao's proof of Proposition~5 in~\cite{Tao} 
(noting that any~$r$-spread family~$\cS$ with $|\cS| \ge r^k$ sets of size~$k$ is also $r$-spread in the sense of~\cite{Tao}).
We now record that \refT{thm:petal} also follows from Rao's proof of~Lemma~4 in~\cite{Rao} 
(where the random subset of~$X$ is formally chosen in a slightly different~way). 
\begin{proof}[Proof of \refT{thm:petal} based on~\cite{Rao}]%
Set~$\gamma = \delta/2$ and~$m = \ceil{\gamma |X|}$. 
Let~$X_i$ denote a set chosen uniformly at random from all $i$-element subsets of~$X$. 
Since $X_\delta$ conditioned on containing exactly~$i$ elements has the same distribution as~$X_i$, 
by the law of total probability and monotonicity it routinely follows that 
${\Pr(\exists S \in \cS: S \subseteq X_\delta)} $ is at least ${\Pr(\exists S \in \cS: S \subseteq X_m)} \cdot {\Pr(|X_\delta| \ge m)}$. 
The proof of~Lemma~4 in~\cite{Rao} 
shows that ${\Pr(\exists S \in \cS: S \subseteq X_m)} > 1-\eps^2$ whenever~$r \ge \alpha \gamma^{-1}\log(k/\epsilon)$, where $\alpha > 0$ is a sufficiently large constant. 
Noting ${|\cS| \le |X|^k}$ we see that ${|\cS| \ge r^k}$ enforces ${|X| \ge r}$, 
so standard Chernoff bounds (such as~\cite[Theorem~2.1]{JLR}) 
imply that ${\Pr(|X_\delta| < m)} \le {\Pr(|X_\delta| \le |X|\delta/2)}$ is at most $e^{-|X|\delta/8} \le e^{-r \delta/8} \le \eps^2$ whenever~$r \ge {16 \delta^{-1}\log(1/\epsilon)}$. 
This completes the proof with~$B = \max\{2\alpha,16\}$, say (since~$(1-\eps^2)^2 \ge 1-\eps$ due to~$0 < \eps \le 1/2$). 
\end{proof}
\end{appendix}

\end{document}